\numberwithin{equation}{section}
\theoremstyle{plain}
 \newtheorem{theorem}{Theorem}[section]
 \newtheorem{lemma}[theorem]{Lemma}
 \newtheorem{corollary}[theorem]{Corollary}
 \newtheorem{proposition}[theorem]{Proposition}
\theoremstyle{definition}
 \newtheorem{example}[theorem]{Example}
 \newtheorem{remark}[theorem]{Remark}
\newcommand{\bN}{\mathbb{N}}
\newcommand{\bZ}{\mathbb{Z}}
\newcommand{\bR}{\mathbb{R}}
\newcommand{\bC}{\mathbb{C}}
\newcommand{\one}{\mathbf{1}}
\begin{document}

\vspace{5mm}
\begin{center}
{\bf
{\large
On the integral modulus of infinitely divisible distributions}}

\vspace{5mm}

David Berger\\
\end{center}

We derive some estimates for the integral modulus of continuity of probability densities of infinitely divisible distributions. The paper is splitted into two parts. The first part deals with general infinitely divisible distributions. The second part is mainly concerned with densities of random integrals with respect to a L\'{e}vy process. We will see major differences between compact and non-compact supports.
\section{Introduction}
The modulus of continuity $||f(z-\cdot)-f(\cdot)||_{L^p(\bR)}$ for $z\in\bR$ has a deep connection to Fourier series and also to the Fourier transform. The decaying rate of the Fourier transform (or weighted versions, see [\ref{Bray}] and cited articles) can be estimated by the modulus of continuity and vice versa, where all these estimates depend on $p\ge 1$. For $1\le p\le 2$ it is hard to obtain estimates for the modulus of continuity in terms of the Fourier transform, but especially the case $p=1$ is very interesting as $\int_{\bR} |f(x-z)-f(x)|\lambda(dx)\le C |z|$ for all $z$ is equivalent to the fact that $f$ is of bounded variation (see [\ref{Ambrosio}, Exercise 3.3, p. 208]). \\
In statistics it is also interesting to know if a probability density is of bounded variation if one wants to estimate the density, see [\ref{Datta}, Theorem 3]. Moreover, if one has a linear process $X=(X_t)_{t\in\bZ}$ with $X_t=\sum_{i=0}^\infty a_i Z_{t-i}$ with $a_i\in \bR$ and $(Z_i)_{i\in\bZ}$ are iid random variables with Lebesgue density $f$, the strong mixing rate of the process $X$ depends on the modulus $||f(z-\cdot)-f(\cdot)||_{L^1(\bR)}$, see [\ref{Gorodetskii}, Theorem and proof].\\
In this paper we are mostly interested in special classes of infinitely divisible distributions. The paper is separated into two parts. The first part is interested in  general infinitely divisible distributions. A probability measure $\mu$ on $\bR$ is infinitely divisible, if there exist constants $\gamma\in \bR$, $a\ge 0$ and a L\'{e}vy measure $\nu$ on $\bR$ (i.e. a measure $\nu$ satisfying $\nu(\{0\})=0$ and $\int_{\bR} \min\{1,x^2\} \nu(dx)<\infty$) such that the Fourier transform $\hat{\mu}$ satisfies
\begin{align}\label{eq1}
\hat{\mu}(z)=\exp\left(-\frac{1}{2}a z^2+i\gamma z +\int\limits_{\bR} (e^{ixz}-1-ixz\one_{(-1,1)}(x)\nu(dx)\right)
\end{align}
for every $z\in\bR$. It can be shown that the triplet $(a,\gamma,\nu)$ is unique, and that for every such triplet $(a,\gamma,\nu)$  the right-hand side of (\ref{eq1}) defines the Fourier transform of an infinitely divisible distribution, see [\ref{Sato}, Theorem 8.1, p. 37].\\
 The normal distribution is itself an infinitely divisible distribution with characteristic triplet $(a,\gamma,0)$. If $a>0$ it has of course a Lebesgue density with very nice properties so it is not very suprising that we find bounds for the modulus and as a consequence we obtain for the larger class of distributions with characteristic triplet $(a,\gamma,\nu)$ with $a>0$ similiar estimates for the integral modulus.\\
In the more complicated case $a=0$, we will give sufficient conditions on the characteristic triplet $(0,\gamma,\nu)$ to have H\"older bounds for the modulus $||f(z-\cdot)-f(\cdot)||_{L^1(\bR)}$ if $\nu(dx)$ has a Lebesgue density in a neighborhood of zero. \\
An important subclass of infinitely divisible distributions is the class of self-decomposable distributions. They are infinitely divisible distributions for which the L\'{e}vy measure has a density of the form $\frac{k(x)}{|x|}$, such that $k$ is increasing on $(-\infty,0)$ and decreasing on $(0,\infty)$, see [\ref{Sato}, Theorem 15.10, p. 95]. They have a Lebesgue density if they are non-degenerate. Furthermore, explicit bounds for the decay of their Fourier transform are known, so it seems natural to start the search for bounds with this class. An important property of these distributions is the unimodality. We will use this property in our proof for the main result. By using known estimates for the modulus and the decay of their Fourier transform it is possible to find upper bounds for the integral modulus and we will see that most of our results are in some sense optimal.\\
The second part of the paper deals with stochastic integrals of deterministic functions with respect to L\'{e}vy processes and their corresponding densities, where we consider compact and non-compact supports.  For the compact support we will deal with kernels which are $\mathcal{C}^1-$diffeomorphisms on their support. We will see that every stochastic integral with such a kernel has a Lebesgue density and derive necessary and sufficient conditions on the L\'{e}vy process and the kernel such that the density is of bounded variation. Based on this we will consider for the non-compact support $[0,\infty)$ kernels such that there exists a sequence $0=t_0<t_1<\dotso<t_n\to\infty$ for $n\to\infty$ such that the kernel is a $\mathcal{C}^1$-diffeomorphism in every $(t_i,t_{i+1})$ for every $i\in\bN_0$.  We will find sufficient conditions for the existence of a Lebesgue density of bounded variation and will especially see that there exist kernel functions such that the property of the existence of a BV-density is independent of the integrating L\'{e}vy-process, which is clearly not the case for the compact case.

\section{Notation and Preliminaries}
To fix notation, by a distribution on $\bR$ we mean a probability measure on $(\bR,\mathcal{B})$ with $\mathcal{B}$ being the Borel $\sigma-$algebra on $\bR$, and similarly, by a signed measure on $\bR$ we mean it to be defined on $(\bR,\mathcal{B})$. By a measure on $\bR$ we always mean a positive measure on $(\bR,\mathcal{B})$, i.e. a $[0,\infty]$-valued $\sigma-$additive set function on $\mathcal{B}$ that assigns the value $0$ to the empty set. The Dirac measure at a point $b\in \bR$ will be denoted by $\delta_{b}$, the Gaussian distribution with mean $a\in\bR$ and variance $b\ge 0$ by $N(a,b)$ and the Lebesgue measure by $\lambda(dx)$. The Fourier transform at $z\in\bR$ of a finite positive measure $\mu$ on $\bR$ will be denoted by $\hat{\mu}(z)=\int_\bR e^{ixz}\,\mu(dx)$. The convolution of two positive measures $\mu_1$ and $\mu_2$ on $\bR$ is defined by $\mu_1\ast \mu_2(B) =\int_\bR \mu_1(B-x)\,\mu_2(dx)$, $B\in\mathcal{B}$, where $B-x=\{y-x|\,y\in B\}$. The law of a random variable $X$ will be dentoted by $\mathcal{L}(X)$. The imaginary unit will be denoted by $i$. We write $\bN=\{1,2,\dotso\}$, $\bN_0=\bN\cup \{0\}$ and $\bZ,\,\bR,\,\bC$ for the set of integers, real numbers and complex numbers, respectively. The indicator function of a set $A\subset \bR$ is denoted by $\one_A$. By $L^1(\bR, A)$ for $A\subset \bC$ we denote the set of all Borel-measurable functions $f:\bR \to A$ such that $\int_\bR |f(x)|\,\lambda(dx)<\infty$. By $BV(\bR,\bR)$ we denote the set of functions $f:\bR\to\bR$ of bounded variation, which means for every decomposition $-\infty<a_1<\dotso<a_n<\infty$ it holds $\sum_{i=1}^{n-1} |f(a_i)-f(a_{i+1})|\le C<\infty$ for some $C>0$ independent of the decomposition. By $TV_f([a,b])$ we denote the total variation of the function $f\in BV(\bR,\bR)$ in the interval $[a,b]$.

\section{Densities of infinitely divisble distributions}
Our goal of this section is to prove some aspects of the integral modulus of continuty of densities from infinitely divisible distributions. We will specialize on infinitely divisible distributions with L\'{e}vy measure $\nu$ such that $|x|\nu(dx)$ has a Lebesgue density around a neighborhood of $0$.\\ 
As stated in the introduction the class of self-decomposable distributions is a subclass of such distributions. All self-decomposable distributions are unimodal, which will play a major rule in the proof of the main theorem, see [\ref{Sato}, Theorem 53.1, p. 404]. We will derive the main result by minorizing the L\'{e}vy measure by a L\'{e}vy measure corresponding to a self-decomposable distribution.\\
We start with an easy example and derive some bounds for the integral modulus of continuity of normal distributions and infinitely divisible distributions with a non-vanishing Gaussian variance.
\begin{lemma}\label{lem1}
Let $\mu_1$ be absolutely continuous with Lebesgue density $f$ and $\mu_2$ be a probability measure. Let $\int_\bR |f(x)-f(x-z)|\lambda(dx)\le h(z)$ for some  $z\in\bR$. Then for the Lebesgue density $g$ of $\mu_1\ast\mu_2$ holds also $\int_{\bR} |g(x)-g(x-z)|\lambda(dx)\le h(z)$. 
\end{lemma}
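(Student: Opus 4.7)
The plan is to use the standard convolution formula for the density of $\mu_1\ast\mu_2$, move the absolute value inside the integral, and then exploit translation invariance of Lebesgue measure together with Fubini's theorem. The reason a density even exists for $\mu_1\ast \mu_2$ is that the convolution of an absolutely continuous measure with an arbitrary probability measure is absolutely continuous, with density $g(x)=\int_\bR f(x-y)\,\mu_2(dy)$; this formula is what the whole argument will hinge on.

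First I would write
\[
g(x)-g(x-z)=\int_\bR \bigl(f(x-y)-f(x-z-y)\bigr)\,\mu_2(dy),
\]
then take absolute values and integrate in $x$:
\[
\int_\bR |g(x)-g(x-z)|\,\lambda(dx)\le \int_\bR\int_\bR |f(x-y)-f(x-z-y)|\,\mu_2(dy)\,\lambda(dx).
\]
Since the integrand is non-negative, Tonelli's theorem lets me exchange the order of integration. For fixed $y$, the inner integral in $x$ equals $\int_\bR |f(u)-f(u-z)|\,\lambda(du)$ after the change of variable $u=x-y$, using the translation invariance of Lebesgue measure. By hypothesis this is bounded by $h(z)$, so integrating over $\mu_2$ and using $\mu_2(\bR)=1$ finishes the estimate.

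The only step requiring any justification is the use of Tonelli, which is automatic because $|f(x-y)-f(x-z-y)|\ge 0$ is jointly measurable and we have just shown its integral is finite. There is no real obstacle — the lemma is essentially a translation-invariance statement: the $L^1$-modulus of continuity is sub-convolutive, because convolving with a probability measure can only average out oscillations.
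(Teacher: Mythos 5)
Your proof is correct and follows essentially the same route as the paper's: the convolution formula $g(x)=\int_\bR f(x-y)\,\mu_2(dy)$, the triangle inequality, Tonelli, and translation invariance of Lebesgue measure. Your write-up is just a more explicit version of the paper's compressed two-line argument.
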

\begin{proof}
We know that $\mu_1\ast \mu_2$ is absolutely continuous with Lebesgue density $g(x)=\int_{\bR}f(x-y)\mu_2(dy)$. We see that
\begin{align*}
\int\limits_{\bR} |g(x)-g(x-z)|\lambda(dx)
\le& \int\limits_{\bR}\int\limits_{\bR}|f(x-y)-f(x-z-y)|\lambda(dx)\mu_2(dy)\\
\le & h(z)\int\limits_{\bR} \mu_2(dy)=h(z).
\end{align*}
\end{proof}
\begin{corollary}\label{cor10}
Let $\mu$ be an infinitely divisible distribution with characteristic triplet $(a,\gamma,\nu)$ such that $a>0$. Then $\mu$ is absolutely continuous and the Lebesgue density $f_\mu$ satifies
\begin{align*}
\int\limits_{\bR} |f_{\mu}(x)-f_{\mu}(x-z)|\lambda(dx)\le C|z|
\end{align*}
for some constant $C$ and every $z\in\bR$.
\end{corollary}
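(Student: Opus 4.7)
The plan is to reduce everything to the Gaussian case and then invoke Lemma~\ref{lem1}. Since both $(a,0,0)$ and $(0,\gamma,\nu)$ are valid L\'evy--Khintchine triplets, the existence/uniqueness theorem cited just after~(\ref{eq1}) yields infinitely divisible probability measures $\mu_1$ and $\mu_2$ with these respective triplets; multiplying their Fourier transforms reproduces $\hat\mu$, so $\mu = \mu_1 \ast \mu_2$. Because $a>0$, the measure $\mu_1$ is simply $N(0,a)$ and has the smooth Lebesgue density $f_1(x) = (2\pi a)^{-1/2}\exp(-x^2/(2a))$. Consequently $\mu$ itself is absolutely continuous with Lebesgue density $f_\mu(x) = \int_\bR f_1(x-y)\,\mu_2(dy)$, which handles the absolute-continuity assertion of the corollary for free.

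Next I would establish the $L^1$-Lipschitz bound for the Gaussian factor $f_1$. Since $f_1\in\mathcal{C}^1(\bR)$ and $f_1'\in L^1(\bR,\bR)$, the fundamental theorem of calculus gives
\[
f_1(x)-f_1(x-z)=\int_{x-z}^{x} f_1'(t)\,\lambda(dt),
\]
and an application of Fubini's theorem yields
\[
\int_\bR |f_1(x)-f_1(x-z)|\,\lambda(dx)\le |z|\,\|f_1'\|_{L^1(\bR)}.
\]
Applying Lemma~\ref{lem1} with $h(z)=|z|\,\|f_1'\|_{L^1(\bR)}$ then transfers this bound verbatim from $f_1$ to the density $f_\mu$ of $\mu_1\ast\mu_2$, which is the desired estimate with $C=\|f_1'\|_{L^1(\bR)}=\sqrt{2/(\pi a)}$.

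I do not anticipate a real obstacle; the argument is a clean two-step reduction. The only detail meriting a moment's care is the existence of the convolution factor $\mu_2$, which is immediate from the L\'evy--Khintchine existence statement, together with the trivial verification that the Gaussian derivative is integrable. Everything else depends only on what has already been proved in Lemma~\ref{lem1}.
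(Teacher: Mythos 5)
Your proposal is correct and follows essentially the same route as the paper: factor $\mu$ as $N(0,a)\ast\mu_2$, bound the integral modulus of the Gaussian density, and transfer the bound via Lemma~\ref{lem1}. The only (immaterial) difference is that you bound the Gaussian modulus by $|z|\,\|f_1'\|_{L^1}$ using the fundamental theorem of calculus and Fubini, whereas the paper computes the modulus explicitly; both yield the same constant $\sqrt{2/(\pi a)}$.
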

\begin{proof}
Let $\mu_1= N(0,a)$ be a normal distribution with mean $0$ and variance $a$. We have that $f_{\mu_1}(x)=1/\sqrt{2\pi a}\exp(-x^2/(2a))$ and find by a simple calculation that
\begin{align*}
\int\limits_{\bR} |f_{\mu_1}(x)-f_{\mu_1}(x-z)|\lambda(dx)=\sqrt{\frac{2}{\pi a}}\int\limits_{(-|z|/2,|z|/2)}\exp(-x^2/(2a))\lambda(dx),
\end{align*}
which is $O(|z|)$ for $|z|\to 0$. The rest follows by Lemma \ref{lem1}.
\end{proof}
\begin{remark}
We could have proven it in another way, as the density is continuous and bounded, but we wanted to show that it is not possible to obtain a better bound for normal distributions. I.e. the proof shows actually 
\begin{align*}
\lim_{z\to 0}|z|^{-1} \int\limits_{\bR} |f_{\mu}(x)-f_{\mu}(x-z)|\lambda(dx)=\sqrt{\frac{2}{\pi a}}
\end{align*}
when $\mu\sim N(0,a)$.
\end{remark}
Now we will state our main result and prove it directly. There are many consequences of this result and we will later show some applications to obtain further infinitely divisble distributions with a density of bounded variation.
\begin{theorem}\label{theorem1}
Let $\mu$ be an infinitely divisible distribution with characteristic triplet $(a,\gamma,\nu)$  where $a\ge 0$, $\gamma\in\bR$ and $\nu$ a L\'{e}vy measure such that $|x|\nu(dx)$ has a Lebesgue density $k$ in a neighborhood around zero with $\liminf_{x\to 0+}k(x)+\liminf_{x\to 0-}k(x)=:c_{\inf}$.
\begin{itemize}
\item[i)] If $c_{\inf}>1/p$ for $1<p\le 2$, then $\mu$ has a Lebesgue density \\$f_{\mu}\in L^1(\bR,\bR^+)\cap L^{p/(p-1)}(\bR,\bR^+)$ and there exists a constant $C>0$ such that
\begin{align*}
\int\limits_{\bR}|f_{\mu}(x-z)-f_{\mu}(x)|\,\lambda(dx)\le C|z|^{\frac{1}{p}}
\end{align*}
for every $z\in\bR$.
\item[ii)] If $c_{\inf}>1$, then $f$ is continuous on $\bR$ and there exists a constant $C>0$ such that
\begin{align*}
\int\limits_{\bR}|f_{\mu}(x-z)-f_{\mu}(x)|\,\lambda(dx)\le C |z|
\end{align*}
for every $z\in\bR$.
\item[iii)] Now let $c_{\sup}:=\limsup_{x\to 0+}k(x)+\limsup_{x\to 0-}k(x)<\frac{1}{p}$  with $p\in (0,\infty)$ and let $a=0$. Then, if $\mu$ has a Lebesgue density $f_{\mu}$, it satisfies
\begin{align}\label{eq3}
\sup_{0\le h \le |z|}\int\limits_{\bR}|f_{\mu}(x-h)-f_{\mu}(x)|\,\lambda(dx)\ge C |z|^{\frac{1}{p}}
\end{align}
for some constant $C>0$ and $z\in(-1,1)$.
\end{itemize}
\end{theorem}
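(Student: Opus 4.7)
The plan for parts (i) and (ii) is to decompose $\mu = \mu_0 \ast \mu'$ where $\mu_0$ has a Lévy measure $\nu_0$ minorizing $\nu$ and corresponding to a self-decomposable (hence unimodal) law. Concretely, I would pick positive constants $c_+ < \liminf_{x\to 0+}k(x)$, $c_- < \liminf_{x\to 0-}k(x)$ with $c_++c_- > 1/p$ in case (i) or $c_++c_- > 1$ in case (ii), and $\delta > 0$ small enough that $k(x)\ge c_+$ on $(0,\delta)$ and $k(x)\ge c_-$ on $(-\delta,0)$. Setting $k_0(x) = c_+ \one_{(0,\delta)}(x) + c_-\one_{(-\delta,0)}(x)$ and $\nu_0(dx) = k_0(x)|x|^{-1}\,\lambda(dx)$, the function $k_0$ is monotone on each side of zero, so $\nu_0$ is a self-decomposable Lévy measure; moreover $\nu-\nu_0$ remains a Lévy measure. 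By Lemma~\ref{lem1}, it then suffices to bound the integral modulus of $f_{\mu_0}$.

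The key ingredient is a unimodal estimate. If $f$ is a unimodal density with mode $x_0$, splitting the integral at $x_0$ and $x_0 + z$ and using monotonicity on each side yields
\[
\int_\bR |f(x-z)-f(x)|\,\lambda(dx) \le 2\int_{x_0-|z|}^{x_0+|z|} f(x)\,\lambda(dx).
\]
Via the Lévy--Khintchine formula and the asymptotic $\int_0^T (1-\cos y)/y\,dy = \log T + O(1)$, one obtains $|\hat{\mu}_0(u)| \le C|u|^{-(c_++c_-)}$ for large $|u|$. In case (i), since $c_++c_- > 1/p$, Hausdorff--Young gives $f_{\mu_0} \in L^{p/(p-1)}(\bR,\bR^+)$, and Hölder's inequality on the interval $(x_0-|z|, x_0+|z|)$ converts the unimodal bound into $C|z|^{1/p}$. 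In case (ii), $c_++c_- > 1$ forces $\hat{\mu}_0 \in L^1$, so $f_{\mu_0}$ is bounded and continuous; the unimodal bound then gives $\le C|z|$, and continuity of $f_\mu = f_{\mu_0}\ast\mu'$ follows.

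For part (iii) the route is via the Fourier transform: the elementary inequality
\[
|e^{ihu}-1|\,|\hat{\mu}(u)| \le \int_\bR |f_\mu(x-h) - f_\mu(x)|\,\lambda(dx)
\]
holds for all $h,u \in \bR$. Given small $\epsilon>0$, the $\limsup$ hypothesis provides $\delta>0$ with $k(x) \le \limsup_{y\to 0\pm}k(y)+\epsilon/2$ on $(0,\delta)$ and $(-\delta,0)$ respectively. Since $a=0$, this gives
\[
\int_\bR(1-\cos(xu))\,\nu(dx) \le (c_{\sup}+\epsilon)\log|u| + O(1),
\]
so $|\hat{\mu}(u)| \ge C|u|^{-(c_{\sup}+\epsilon)}$ for large $|u|$. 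Applying the Fourier inequality with $h=|z|$ and $u=\pi/|z|$ yields $\|f_\mu(\cdot-z)-f_\mu\|_{L^1(\bR)} \ge 2C|z|^{c_{\sup}+\epsilon}$; picking $\epsilon$ so that $c_{\sup}+\epsilon < 1/p$, and using $|z|<1$, produces the desired lower bound $\ge C|z|^{1/p}$ (the supremum over $h \le |z|$ in the statement then follows trivially).

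The main technical obstacle, in my view, is arranging the minorant $\nu_0$ to be genuinely self-decomposable while capturing the $\liminf$ data of $k$, and accurately tracking the Fourier decay exponent through the Lévy--Khintchine formula. The unimodal bound itself is elementary, but it is the crucial bridge from $L^{p/(p-1)}$ integrability to an $L^1$-modulus estimate; without unimodality, $L^q$-control alone would be insufficient.
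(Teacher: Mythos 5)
Your proof is correct. For parts (i) and (ii) it is essentially the paper's argument: minorize $\nu$ near the origin by a self-decomposable L\'evy measure with piecewise-constant $k_0$ capturing the liminf data, exploit unimodality to reduce the integral modulus to $2\int_{x_0-|z|}^{x_0+|z|}f_{\mu_0}\,\lambda(dx)$, get the polynomial Fourier decay of exponent $c_++c_-$, pass to $L^{p/(p-1)}$ by Hausdorff--Young, apply H\"older (resp.\ boundedness when $\hat\mu_0\in L^1$), and transfer to $\mu$ by the convolution lemma. Where you genuinely diverge is part (iii). The paper first treats the special case of monotone $k$ by citing an external lower bound $|\hat\mu(u)|\ge C(1+|u|)^{-c-\varepsilon}$ (Trabs) together with the Bray--Pinsky inequality relating $\sup_{|x|\ge 1/|z|}|\hat\mu(x)|$ to the modulus, and then handles general $k$ by majorizing $\nu$ and running a contrapositive through Lemma \ref{lem1}. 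You instead prove the lower bound on $|\hat\mu(u)|$ directly from the L\'evy--Khintchine formula using only $k(x)\le \limsup k + \epsilon/2$ near $0$ and $\int_0^T(1-\cos y)/y\,dy=\log T+O(1)$ (no monotonicity needed, so no majorization step), and you replace Bray--Pinsky by the elementary identity $\widehat{f(\cdot-h)-f}(u)=(e^{ihu}-1)\hat f(u)$ evaluated at $h=|z|$, $u=\pi/|z|$, where $|e^{ihu}-1|=2$. This makes part (iii) self-contained, removes two citations, and avoids the slightly delicate ``otherwise the majorizing distribution would not satisfy iii)'' step; the only price is that you must note the tail contribution $\int_{|x|\ge\delta}(1-\cos(xu))\,\nu(dx)\le 2\nu(\{|x|\ge\delta\})=O(1)$, which you implicitly do.
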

\begin{proof}
For the proof assume that $a=0$ as otherwise the assertion would be implied by Corollary \ref{cor10}. For the proof of i) and ii) we assume first that $k$  is increasing on $(-\delta,0)$ and decreasing on $(0,\delta)$ for some $\delta>0$ and else $0$ such that $(0,\gamma,\frac{k(x)}{|x|}\lambda(dx))$ is the characteristic triplet of a self-decomposable distribution $\mu$, see [\ref{Sato}, Theorem 15.10].\\
i) We then know that $c=c_{\inf}=c_{\sup}=k(0+)+k(0-)>0$ . Then it holds true that $|\hat{\mu}(z)|=o(|z|^{-\alpha})$ as $|z|\to\infty$ with $0<\alpha <c$, see [\ref{Sato}, Lemma 28.5, p. 191]. If $c>\frac{1}{p}$, it follows that $\hat{\mu}\in L^p(\bR,\bC)$ and we conclude that $f_{\mu}\in L^{p^*}(\bR,[0,\infty))$, see [\ref{Grafakos}, Proposition 2.2.16., p. 104], where $p^*=\frac{p}{p-1}$. As $\mu_{1}$ is unimodal (with mode $m$), we get for $z$ positive
\begin{align}\label{eq2}
&\int\limits_{\bR}|f_{\mu}(x-z)-f_{\mu}(x)|\,\lambda(dx) \nonumber \\
=&\int\limits_{(-\infty,m)}f_{\mu}(x)-f_{\mu}(x-z)\,\lambda(dx)+\int\limits_{(m,m+z)}|f_{\mu}(x-z)-f_{\mu}(x)|\,\lambda(dx)\nonumber\\
&+\int\limits_{(m+z,\infty)}f_{\mu}(x-z)-f_{\mu}(x)\,\lambda(dx\nonumber)\\
=&\int\limits_{(-\infty,m)}f_{\mu}(x)\,\lambda(dx)+\int\limits_{(m+z,\infty)}f_{\mu}(x-z)\lambda(dx)\nonumber\\
&-\left(\int\limits_{(-\infty,m)}f_{\mu}(x-z)\,\lambda(dx) +\int\limits_{(m-z,m+z)}f_{\mu}(x)\,\lambda(dx)+\int\limits_{(m+z,\infty)}f_{\mu}(x)\,\lambda(dx) \right)\nonumber\\
&+\int\limits_{(m,m+z)}|f_{\mu}(x-z)-f_{\mu}(x)|\,\lambda(dx)+\int\limits_{(m-z,m+z)}f_{\mu}(x)\,\lambda(dx)\nonumber\\
=&1-1+\int\limits_{(m,m+z)}|f_{\mu}(x-z)-f_{\mu}(x)|\,\lambda(dx)+\int\limits_{(m-z,m+z)}f_{\mu}(x)\,\lambda(dx).
\end{align}
Now as $f\in L^{p^*}(\bR)$, we conclude that
\begin{align*}
&\int\limits_{\bR}|f_{\mu_1}(x-z)-f_{\mu_1}(x)|\,\lambda(dx)\\
\le&  \int\limits_{(m,m+z)}|f_{\mu_1}(x-z)|\lambda(dx)+ \int\limits_{(m,m+z)}|f_{\mu_1}(x)|\lambda(dx)+\int\limits_{(m-z,m+z)}f_{\mu_1}(x)\,\lambda(dx)\\
&\le ||f_{\mu_1}||_{L^{p^*}}z^{\frac{1}{p}}+ ||f_{\mu_1}||_{L^{p^*}}z^{\frac{1}{p}}+ 2^{\frac{1}{p}}||f_{\mu_1}||_{L^{p^*}}z^{\frac{1}{p}}
\le (2+{2^{\frac{1}p}})||f_{\mu_1}||_{L^{p^*}} \,z^{\frac{1}{p}}. 
\end{align*}
The assumption for $z<0$ follows by symmetry.\\
ii) Since $c=k(0+)+k(0-)>1$, it follows from [\ref{Sato}, Theorem 28.4] that $f_\mu$ is continuous on $\bR$. Hence we can bound the modulus by (\ref{eq2}) (for $z>0$) by
\begin{align*}
&\int\limits_{\bR}|f_{\mu}(x-z)-f_{\mu}(x)|\,\lambda(dx)\\
\le& \int\limits_{(m,m+z)}|f_{\mu}(x-z)-f_{\mu}(x)|\,\lambda(dx)+\int\limits_{(m-z,m+z)}f_{\mu}(x)\,\lambda(dx)\\
\le& \sup_{x\in\bR}|f_{\mu}(x)|\left(2\int\limits_{(m,m+z)}\lambda(dx)+\int\limits_{(m-z,m+z)}\,\lambda(dx)\right)\\
=&4\sup_{x\in\bR}|f_{\mu}(x)| z.
\end{align*}
Now we assume that $\mu$ is infinitely divisble with characteristic triplet $(0,\gamma,\nu)$ such that there exists $\delta>0$ such that $|x|\nu(dx)$ has a Lebesgue density $k$ in $(-\delta,\delta)$. We know that there exists for small $\varepsilon>0$ a $\rho>0$ such that $k(x)\ge \liminf_{x\to 0+}k(x)-\frac{\varepsilon}{2}>0$  for every $x\in (0,\rho)$ and $k(x)\ge \liminf_{x\to 0-}k(x)-\frac{\varepsilon}{2}>0$  for every $x\in (-\rho,0)$. So we can find a minorizing L\'{e}vy measure $l(x)/|x|\lambda(dx)$ for $\nu$ by setting $$l(x)=\one_{(0,\rho)}\left(\liminf_{x\to 0+}k(x)-\frac{\varepsilon}{2}\right)+\one_{(-\rho,0)} \left(\liminf_{x\to 0-}k(x)-\frac{\varepsilon}{2}\right)$$ 
Let $\mu_1$ be the self-decomposable distribution with triplet $(0,\gamma,\frac{l(x)}{|x|}\lambda(dx))$ and $\mu_2$ be the infinitely divisible distribution wtih triplet $(0,0,(\nu-\frac{l(x)}{|x|}\lambda)(dx))$. Then $\mu=\mu_1\ast\mu_2$ and since $\mu_1$ satisfies i) and ii) respectively, if $\varepsilon$ is chosen small enough, so does $\mu$ by Lemma \ref{lem1}.\\\\
iii) First assume that $\mu$ is such that $|x|\nu(dx)$ has a (bounded) Lebesgue density $k$ in $(-\delta,\delta)$ such that  $k$ is monotone on $(-\delta,0)$ and on $(0,\delta)$. Observe that for every $\varepsilon>0$ there exists a constant $C>0$ such that $|\hat{\mu}(z)|>C(1+|z|)^{-c-\varepsilon}$ for every $z\in\bR$, see [\ref{Trabs}, Proposition 1]. Moreover, we know by [\ref{Bray}, Corollary 3] that
\begin{align*}
\sup_{|x|\ge \frac{1}{|z|}}|\hat{\mu}(x)|\le C' \sup_{0\le h \le |z|}\int\limits_{\bR}|f_{\mu}(x-h)-f_{\mu}(x)|\,\lambda(dx)
\end{align*}
for some constant $C'$. So we see that
\begin{align*}
\tilde{C}|z|^{c+\varepsilon}\le C\left(1+\frac{1}{|z|}\right)^{-c-\varepsilon}\le C' \sup_{0\le h \le |z|}\int\limits_{\bR}|f_{\mu}(x-h)-f_{\mu}(x)|\,\lambda(dx)
\end{align*}
for some constants $C,\tilde{C}>0$ with $|z|<1$. Choosing $\varepsilon=\frac{1}p -c$ gives the claim in this special case.\\
For general $\mu$ we set $$l(x)=\one_{(0,\rho)}\left(\limsup_{x\to 0+}k(x)+\frac{\varepsilon}{2}\right)+\one_{(-\rho,0)} \left(\limsup_{x\to 0-}k(x)+\frac{\varepsilon}{2}\right)$$ and majorize $\nu$ by $l(x)/|x|\lambda|_{(-\delta,\delta)}(dx)+\nu|_{(-\delta,\delta)^c}(dx)$ which gives us our assertion by Lemma \ref{lem1}, as  otherwise the majorizing distribution would not satisfy iii).
\end{proof}
\begin{remark}\mbox{}\\
\begin{itemize} 
\item[i)]  For  Theorem \ref{theorem1} i) and ii) it is sufficient that $|x|\nu(dx)$ can be minorized by a measure with the sufficient conditions. Similarly, for Theorem \ref{theorem1} iii) it is sufficient that $|x|\nu(dx)$ can be majorized by a measure with the sufficient conditions and that $a=0$. This follows from Lemma \ref{lem1}.
\item[ii)] For Theorem \ref{theorem1} iii) one can give further conditions on $k$ such that $c_{\sup}=1/p$ is sufficient for (\ref{eq3}) to hold, see for example  [\ref{Trabs}, Proposition 1].
\end{itemize}
\end{remark}

Another example where we can apply the same techniques is a symmetric infinitely divisible distribution $\mu$ with characteristic triplet $(0,0,\nu)$ such that $\nu$ is unimodal and has mode $0$. Then also $\mu$ is unimodal with mode $0$, see [\ref{Sato}, Theorem 54.2]. 
\begin{corollary}\label{cor11}
Let $\mu$ be an infinitely divisible distribution with characteristic triplet $(0,0,\nu)$. Assume that
\begin{align*}
\liminf_{r\to 0} \frac{\int_{[-r,r]}x^2 \,\nu(dx)}{ r^2\log (\frac{1}{r})}=:C>\frac{1}{2p}
\end{align*}
for some $1<p\le 2$. Then $\mu$ has a Lebesgue density $f_\mu\in L^{1}(\bR,[0,\infty))\cap L^{p/(p-1)}(\bR,[0,\infty))$. Furthermore, if $\nu$ is unimodal with mode $0$ and $\mu$ is symmetric, then  there exists a constant $C>0$ such that
\begin{align*}
\int\limits_{\bR}|f_\mu(x-z)-f_\mu(x)|\,\lambda(dx)\le C |z|^{\frac{1}{p}}
\end{align*}
for every $z\in\bR$. \\
If additionally the condition 
\begin{align*}
\liminf\limits_{r\to 0}\frac{\int\limits_{[-r,r]}x^2\nu(dx)}{r^{2-\alpha}}>0,
\end{align*}
is satisfied for some $\alpha \in (0,2)$, we can bound the modulus by $|z|$ times a constant.
\end{corollary}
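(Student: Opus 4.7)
The plan is to derive polynomial (and, for the last statement, stretched-exponential) decay of $|\hat\mu|$ from the hypothesis on $\int_{[-r,r]}x^2\,\nu(dx)$, then to use the converse Hausdorff--Young inequality [\ref{Grafakos}, Proposition 2.2.16] to obtain the density and invoke the unimodality argument from Theorem \ref{theorem1} for the integral modulus. Concretely, I would first pass to the symmetrization $\mu\ast\tilde\mu$, whose L\'evy measure $\nu+\tilde\nu$ satisfies $\int_{[-r,r]}x^2\,(\nu+\tilde\nu)(dx)=2\int_{[-r,r]}x^2\,\nu(dx)$, so that $|\hat\mu(z)|^2=\exp\bigl(-\int_{\bR}(1-\cos(xz))(\nu+\tilde\nu)(dx)\bigr)$. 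Applying Jordan's inequality $1-\cos u\ge 2u^2/\pi^2$ on $|u|\le\pi$, restricting the integration to $|x|\le\pi/|z|$, and using the hypothesis with $r=\pi/|z|$, a routine calculation should give, for any $\varepsilon>0$ and all sufficiently large $|z|$,
\begin{align*}
-\log|\hat\mu(z)|\ge (2C-\varepsilon)\log(|z|/\pi),
\end{align*}
so $|\hat\mu(z)|\le(|z|/\pi)^{-(2C-\varepsilon)}$. Since $C>1/(2p)$, I can choose $\varepsilon$ so that $(2C-\varepsilon)p>1$, obtaining $\hat\mu\in L^p(\bR,\bC)$, and [\ref{Grafakos}, Proposition 2.2.16] then produces the density $f_\mu\in L^1\cap L^{p/(p-1)}(\bR,[0,\infty))$.

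For the H\"older modulus bound, the additional hypotheses (symmetric $\mu$, $\nu$ unimodal with mode $0$) give that $\mu$ is unimodal with mode $0$ by [\ref{Sato}, Theorem 54.2]. The chain of equalities in the proof of Theorem \ref{theorem1} i) ending in \eqref{eq2} then applies verbatim with $m=0$, and combining it with H\"older's inequality and the bound on $\|f_\mu\|_{L^{p/(p-1)}}$ established above produces the claimed $|z|^{1/p}$ estimate. For the final assertion, the stronger hypothesis $\int_{[-r,r]}x^2\,\nu(dx)\ge c\,r^{2-\alpha}$ feeds into the same Fourier argument but now yields $-\log|\hat\mu(z)|\ge c'|z|^\alpha$, i.e.\ stretched-exponential decay. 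Hence $\hat\mu\in L^1(\bR,\bC)$, so $f_\mu$ is continuous and bounded, and the argument in Theorem \ref{theorem1} ii) (which uses only unimodality and boundedness of the density) delivers the $4\sup_x|f_\mu(x)|\cdot|z|$ bound.

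The main obstacle is getting the right constant in the Fourier estimate: the na\"ive inequality $1-\cos u\ge u^2/4$ on $|u|\le 1$ would only deliver the weaker threshold $C>2/p$, so matching the sharp hypothesis $C>1/(2p)$ forces me to use Jordan's inequality on the full interval $|u|\le\pi$. The factor $\pi^2$ gained by enlarging the integration range for $x^2\,\nu(dx)$ to $|x|\le\pi/|z|$ exactly offsets the loss in the pointwise constant $2/\pi^2$. Once this optimal decay estimate is in place, everything reduces to previously proved material via Lemma \ref{lem1}, Sato's unimodality theorem, and the unimodal decomposition already carried out for Theorem \ref{theorem1}.
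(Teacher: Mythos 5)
Your proposal is correct and follows essentially the same route as the paper: Jordan's inequality $1-\cos u\ge 2u^2/\pi^2$ with truncation at $|x|\le\pi/|z|$ to get $|\hat\mu(z)|\lesssim |z|^{-2(C-\varepsilon)}$, Hausdorff--Young for the density, Sato's Theorem 54.2 for unimodality, and the modulus estimates from Theorem \ref{theorem1} i) and ii). The symmetrization detour is harmless but unnecessary, since $|\hat\mu(z)|=\exp\bigl(\int_{\bR}(\cos(xz)-1)\,\nu(dx)\bigr)$ already holds for the triplet $(0,0,\nu)$, and your explicit Fourier-decay derivation of continuity in the last step simply replaces the paper's citation of [\ref{Sato}, Proposition 28.3].
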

\begin{proof}
Assume that
\begin{align*}
\liminf_{r\to 0} \frac{\int_{[-r,r]}x^2 \,\nu(dx)}{ r^2\log (\frac{1}{r})}:=C>\frac{1}{2p}.
\end{align*}
 Then there exists a constant $\varepsilon>0$ such that $\int_{[-r,r]}x^2\nu(dx)\ge (C-\varepsilon) r^2\log \frac{1}{r}  $ for small enough $r$ and $C-\varepsilon>\frac{1}{2p}$. As $1-\cos(u)\ge2\left(\frac{u}{\pi}\right)^2$ for $|u|\le \pi$, we see that
\begin{align*}
|\hat{\mu}(z)|= &\exp\left( \int\limits_{\bR} (\cos(xz)-1) \nu(dx) \right)\\
\le &\exp\left( - \frac{2}{\pi^2}\int\limits_{|x|\le \pi/|z|} z^2x^2 \nu(dx) \right)\\
\le &\exp\left(-  \frac{2}{\pi^2}z^2(C-\varepsilon)\frac{\pi^2}{z^2}\log\left|\frac{z}{\pi}\right| \right)\\
=&\exp\left(-\log\left|\frac{z}{\pi}\right|^{2(C-\varepsilon)}\right)=\frac{\pi^{2(C-\varepsilon)}}{|z|^{2(C-\varepsilon)}}\le \frac{\pi^{2(C-\varepsilon)}}{|z|^{\frac{1}{p}+\delta}}
\end{align*} 
for some $\delta>0$ and $|z|$ great enough. It follows that $\hat{\mu}\in L^p(\bR,\bC)$ and from that we conclude that there exists a density, which is $p/(p-1)$-integrable Now if $\nu$ is additionally unimodal with mode $0$ then so is $\mu$, see [\ref{Sato}, Theorem 54.2]. By the same proof as in Theorem \ref{theorem1} i) we conclude that the modulus of continuity can be bounded by $|z|^{\frac{1}{p}}$ times a constant. If the L\'{e}vy-measure especially satisfies the condition
\begin{align*}
\liminf\limits_{r\to 0}\frac{\int\limits_{[-r,r]}x^2\nu(dx)}{r^{2-\alpha}}>0,
\end{align*}
the Lebesgue density is continuous, see [\ref{Sato}, Proposition 28.3] and the modulus of continuity is bounded by $|z|$ times a constant by the same proof as in Theorem \ref{theorem1} ii).\\
\end{proof}

\section{Densities of stochastic integrals}
In this section we look at distributions arising as stochastic integrals $\int_0^t g(s)dL(s)$ or $\int_0^\infty g(s)dL(s)$, when $g$ is a deterministic function and $L$ a L\'{e}vy process. A L\'{e}vy process is a real-valued stochastic process $L=(L_t)_{t\ge 0}$ with stationary and independent increments, such that $L_0=0$ almost surely and such that the paths of $L$ are right-continuous with finite left-limits. There exists a one-to-one correspondence between infinitely divisible distributions and L\'{e}vy processes (in law). In particular, the distribution of a L\'{e}vy process $L$ at time $1$ is infinitely divisible and characterizes the distribution of $L$. The characteristic triplet of $\mathcal{L}(L_1)$ is then also called the characteristic triplet of $L$. \\
The existence of the integrals $\int_0^t g(s)dL(s)$ or $\int_0^\infty g(s)dL(s)$ can be completely characterized by the characteristic triplet $(a,\gamma,\nu)$ of $L$ and $g$, see   [\ref{Rajput}, Theorem 2.7, p. 461]. Moreover, the integrals are infinitely divisible with characteristic triplet $(a_g,\gamma_g,\nu_g)$ where
\begin{align}
\gamma_g&=\int\limits_{[0,t)}\left( \gamma g(s)+\int\limits_{\bR}g(s)r(\one_{[-1,1]}(g(s)r)-\one_{[-1,1]}(r))\,\nu(dr)\right)\lambda(ds),\nonumber\\
a_g&=\int\limits_{[0,t)} a g(s)^2\,\lambda(ds)\textrm{ and}\label{eq4}\\
\nu_g(B)&=\int\limits_{[0,t)}\int\limits_{\bR} \one_{B\setminus\{0\}}(g(s)r)\,\nu(dr)\lambda(ds),\quad B\in\mathcal{B}\label{eq5}
\end{align}
with $t\in [0,\infty]$.
\subsection{Stochastic Integrals with compact support}
Now look at distributions of the form $Z=\int_{[0,t]}g(s)\,dL(s)$, where $t\in [0,\infty)$ and $L=(L_s)_{s\ge 0}$ is a L\'{e}vy process with characteristic triplet $(0,\gamma,\nu)$ with $\nu(\bR)>0$. We give sufficient conditions depending on $L$ and $g$ such that $Z$ satisfies the assumptions of Theorem \ref{theorem1}. We immediately restrict to the case when the Gaussian variance $a=0$, for otherwise $a_g>0$ by (\ref{eq4}) (unless $\int_0^t g(s)^2\,\lambda(ds)=0$) and hence Corollary \ref{cor10} can be applied. We start with the following lemma, where we write $\frac{x}{B}:=\{ \frac{x}{b}:b\in B\}$ for $x\in\bR$ and $B\subset \bR\setminus \{0\}$.
\begin{lemma}\label{cor2}
 Let $g:[0,t]\to \bR$ be a $\mathcal{C}^1$-Diffeomorphism onto its range.
\begin{itemize}
\item[i)] Then $|x|\nu_g(dx)$ is absolutely continuous with Lebesgue density $k$ given by
\begin{align*}
k(x)=\int_{\bR} \one_{g([0,t])}(x/r)\frac{|x|}{|r|}\left|(g^{-1})'(x/r)\right|\nu(dr)<\infty
\end{align*}
for all $x\in\bR\setminus \{0\}$. 
\item[ii)] Let $g>0$ in $[0,t]$. If $\liminf_{x\to 0+}\nu(\frac{x}{g([0,t])})=\lambda_1>0$, then $\liminf_{x\to 0+}k(x)\ge \inf_{y\in g([0,t])}|y| |(g^{-1})'(y)|\lambda_1$ and if $\limsup_{x\to 0+}\nu(\frac{x}{g([0,t])})=\lambda_2<\infty$, then \\$\limsup_{x\to 0+}k(x)\le  \sup_{y\in g([0,t])}|y||(g^{-1})'(y)|\lambda_2$.
\end{itemize}
\end{lemma}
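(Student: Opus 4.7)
The plan is to derive (i) by a change of variables inside the definition (\ref{eq5}) of $\nu_g$ and then obtain (ii) as a direct consequence of pointwise inf/sup bounds on the resulting integrand. For (i), starting from (\ref{eq5}) and applying Fubini I get
\[
\nu_g(B)=\int_{\bR}\int_{[0,t)}\one_{B\setminus\{0\}}(g(s)r)\,\lambda(ds)\,\nu(dr),
\]
and for each fixed $r\neq 0$ the map $s\mapsto g(s)r$ is a $\mathcal{C}^1$-diffeomorphism of $[0,t]$ onto $r\cdot g([0,t])$ (its derivative $g'(s)r$ is non-vanishing), with inverse $y\mapsto g^{-1}(y/r)$ and Jacobian $|(g^{-1})'(y/r)|/|r|$. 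The inner integral thus equals
\[
\int_{B}\one_{g([0,t])}(y/r)\,\frac{|(g^{-1})'(y/r)|}{|r|}\,\lambda(dy).
\]
Swapping the integrals back identifies $\nu_g$ as absolutely continuous, and multiplying through by $|x|$ reads off the claimed formula for $k$.

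The key observation for both the finiteness claim in (i) and for part (ii) is that the integrand defining $k(x)$ is nonzero precisely when $r\in x/g([0,t])$, where it equals $|x/r||(g^{-1})'(x/r)|$, i.e.\ the continuous function $y\mapsto |y||(g^{-1})'(y)|$ evaluated at $y=x/r\in g([0,t])$. Compactness of $g([0,t])$ yields finite sup and inf for this function, hence the two-sided bound
\[
\inf_{y\in g([0,t])}|y||(g^{-1})'(y)|\cdot\nu(x/g([0,t]))\;\le\; k(x)\;\le\;\sup_{y\in g([0,t])}|y||(g^{-1})'(y)|\cdot\nu(x/g([0,t])).
\]
Finiteness in (i) then follows because $g([0,t])$ is a bounded interval, so for $x\neq 0$ the set $x/g([0,t])$ is bounded away from $0$ and is $\nu$-finite by the L\'evy property; (ii) follows by passing to $\liminf_{x\to 0+}$ and $\limsup_{x\to 0+}$ in the two inequalities above and inserting the stated hypotheses on $\nu(x/g([0,t]))$, using that under $g>0$ one has $0\notin g([0,t])$, so the inf and sup constants are automatically the ones appearing in the statement.

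The main subtlety I anticipate is routine bookkeeping: verifying joint $(s,r)$-measurability for the Fubini applications and tracking the orientation of $g$ in the change of variables (the resulting sign is absorbed into the absolute value in the Jacobian). The case in part (i) where $g([0,t])$ happens to contain $0$ is harmless: then $x/g([0,t])$ becomes unbounded at infinity but still stays bounded away from $0$, which is all the L\'evy property needs for $\nu$-finiteness.
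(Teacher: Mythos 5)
Your argument is correct and matches the paper's proof in all essentials: the same change of variables $y=g(s)r$ in (\ref{eq5}) with Jacobian $|(g^{-1})'(y/r)|/|r|$ to identify the density, the same observation that for $x\neq 0$ the integral runs over the set $x/g([0,t])$, which is bounded away from zero and on which $|y|\,|(g^{-1})'(y)|$ is bounded by compactness, and the same two-sided inf/sup bound followed by passage to $\liminf$ and $\limsup$ for part (ii).
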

\begin{proof}
i) We know from (\ref{eq5}) that for every $A\in\mathcal{B}(\bR)$
\begin{align*}
(|x|\nu_g)(A)=&\int\limits_{\bR}\int\limits_{[0,t]} |g(s)r|\one_{A}(g(s)r)\lambda(ds)\nu(dr)\\
=&\int\limits_{\bR}\int\limits_{r g([0,t])}\frac{|x|}{|r|}\left|(g^{-1})'(x/r)\right|\one_{A}(x)\lambda(dx)\nu(dr)\\
=&\int\limits_{\bR}\one_{A}(x)\int\limits_{\bR} \one_{g([0,t])}(x/r)\frac{|x|}{|r|} \left|(g^{-1})'(x/r)\right|\nu(dr)\lambda(dx).
\end{align*}
So we see that the density is given by $\int_{\bR} \one_{g([0,t])}(x/r)\frac{|x|}{|r|}\left|(g^{-1})'(x/r)\right|\nu(dr)$. Observe that the integral is taken for every $x \neq 0$ in a set away from zero, so boundedness is enough for the finiteness of the integral.\\
ii) Now assume that $g>0$. We see that for $x\in\bR\setminus\{0\}$
\begin{align*}
&\int_{\bR} \one_{g([0,t])}(x/r)\frac{|x|}{|r|}\left|(g^{-1})'(x/r)\right|\nu(dr)\\
\le & \sup_{y\in g([0,t])}|y||(g^{-1})'(y)|\int_{x/g([0,t])}\nu(dr)\\
=& \sup_{y\in g([0,t])}|y||(g^{-1})'(y)| (\nu(x/g([0,t]))
\end{align*} 
and
\begin{align*}
\int_{\bR} \one_{g([0,t])}(x/r)\frac{|x|}{|r|}\left|(g^{-1})'(x/r)\right|\nu(dr)\ge \inf_{y\in g([0,t])}|y||(g^{-1})'(y)|\nu\left(\frac{x}{g([0,t])}\right).
\end{align*}
The rest follows by taking the limits.\\
\end{proof}
\begin{remark}
For the existence of a Lebesgue density of $\nu_g$ it is enough to assume that preimages of Lebesgue null sets under $g$ are again Lebesgue null sets, a condtion called Luisin $(N^{-1})$-condition. To see this, let $B\in\mathcal{B}$ be a Lebesgue null set. Then so is $\frac{1}{r}(B\setminus\{0\})$ for every $r\neq 0$ and hence by $(\ref{eq5})$ and the Lusin $(N^{-1})$-condition we obtain
\begin{align*}
\nu_g(B)&=\int\limits_{\bR} \int\limits_{[0,t]} \one_{g^{-1}\left(\frac{1}{r}(B\setminus \{0\})\right)}(s)\lambda(ds)\nu(dr)\\
&=\int\limits_{\bR} \lambda\left(g^{-1}\left(\frac{1}{r}\left(B\setminus\{0\}\right)\right)\right)\nu(dr)=0.
\end{align*}
This shows that $\nu_g$ is absolutely continuous and hence has a density. Sufficient conditions for the Lusin $(N)^{-1}$-conditions to hold can be found in
 [\ref{Hencl}, Theorem 4.13, p. 74].
\end{remark}
As a consequence of Lemma \ref{cor2} and Theorem \ref{theorem1} we find sufficient conditions for the existence of a Lebesgue density of bounded variation.
\begin{corollary}\label{cor4}
Let $g:[0,t]\to \bR$ be a $\mathcal{C}^1-$diffeomorpism onto its range and $L$ be a L\'{e}vy process with characteristic triplet $(0,\gamma,\nu)$ with $\nu(\bR)>0$. Let $Z=\int_{[0,t]}g(t)\,dL(t)$.\\
i) Let $\nu(\bR)=\infty$. Then the distribution of $Z$ is absolutely continuous.\\
ii)  Let $g>0$ on $[0,t]$. If $$\left(\liminf_{x\to 0+}\nu\left(\frac{x}{g([0,t])}\right)+\liminf_{x\to 0-}\nu\left(\frac{x}{g([0,t])}\right)\right)\inf_{y\in g([0,t])}|y||(g^{-1})'(y)|>1,$$ then $Z$ has a density which is of bounded variation.\\
iii)  Let $g>0$ on $[0,t]$.  If  $$\left(\limsup_{x\to 0+}\nu\left(\frac{x}{g([0,t])}\right)+\limsup_{x\to 0-}\nu\left(\frac{x}{g([0,t])}\right)\right)\sup_{y\in g([0,t])}|y||(g^{-1})'(y)|<1,$$ then the density of the random variable $Z$ (if existent) cannot be of bounded variation. 
\end{corollary}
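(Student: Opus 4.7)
The plan is to combine Lemma~\ref{cor2}, which translates the L\'{e}vy measure $\nu_g$ of $Z$ into an explicit density $k$ of $|x|\nu_g(dx)$, with Theorem~\ref{theorem1} applied to the infinitely divisible distribution $\mathcal{L}(Z)$, whose characteristic triplet is $(0,\gamma_g,\nu_g)$ by the standing assumption that the Gaussian part of $L$ vanishes. Throughout I exploit that $g$ being a $\mathcal{C}^1$-diffeomorphism onto its range forces $g'\neq 0$ everywhere and hence $g$ to be strictly monotone, so that $\{s\in[0,t]:g(s)=0\}$ is at most a single point.

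For part (i), Lemma~\ref{cor2}(i) shows $\nu_g$ to admit the Lebesgue density $k(x)/|x|$ on $\bR\setminus\{0\}$, hence $\nu_g\ll\lambda$. From (\ref{eq5}), using $\nu(\{0\})=0$ together with the fact that $g$ vanishes at most on a $\lambda$-null subset of $[0,t]$,
\begin{align*}
\nu_g(\bR)=\int_{[0,t]}\nu(\bR)\,\lambda(ds)=t\,\nu(\bR)=\infty.
\end{align*}
Since $a_g=0$, the classical criterion [\ref{Sato}, Theorem~27.7] --- an infinitely divisible distribution whose L\'{e}vy measure has an absolutely continuous component of infinite total mass is itself absolutely continuous --- yields that $\mathcal{L}(Z)$ admits a Lebesgue density.

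For part (ii), I apply Lemma~\ref{cor2}(ii) separately to $x\to 0+$ and $x\to 0-$ (the assumption $g>0$ makes both limits symmetric by the evident sign-reflection argument), which translates the numerical hypothesis into $\liminf_{x\to 0+}k(x)+\liminf_{x\to 0-}k(x)>1$. Theorem~\ref{theorem1}(ii), applied to $\mathcal{L}(Z)$, then delivers $\int_\bR|f_Z(x-z)-f_Z(x)|\,\lambda(dx)\le C|z|$, which by the Ambrosio characterization recalled in the introduction is equivalent to $f_Z\in BV(\bR,\bR)$.

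For part (iii), the corresponding $\limsup$ half of Lemma~\ref{cor2}(ii) yields $c_{\sup}<1$, so I may choose $p>1$ with $c_{\sup}<1/p<1$ and invoke Theorem~\ref{theorem1}(iii) to obtain $\sup_{0\le h\le|z|}\int_\bR|f_Z(x-h)-f_Z(x)|\,\lambda(dx)\ge C|z|^{1/p}$ for all $z\in(-1,1)$. If $f_Z$ were of bounded variation, the same supremum would be bounded by $C'|z|$; since $1/p<1$ forces $|z|^{1/p-1}\to\infty$ as $|z|\to 0+$, the two estimates are incompatible for small $|z|$, giving the contradiction. The most delicate point is (i): mere infiniteness of $\nu$ does not automatically deliver $\mathcal{L}(Z)\ll\lambda$, and one really needs both the density conclusion of Lemma~\ref{cor2}(i) and the infinite-mass count displayed above before Sato's theorem can be invoked; the diffeomorphism hypothesis on $g$ is precisely what makes both ingredients available.
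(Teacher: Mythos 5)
Your proof is correct and follows essentially the same route as the paper, which simply cites Lemma~\ref{cor2}, Theorem~\ref{theorem1} and [\ref{Sato}, Theorem 27.7]; you merely fill in the details the paper leaves implicit (the computation $\nu_g(\bR)=t\,\nu(\bR)=\infty$, the sign-reflected half of Lemma~\ref{cor2}(ii), the choice of $p$ with $c_{\sup}<1/p<1$, and the BV characterization via the integral modulus). No gaps.
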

\begin{proof}
i) This follows by $(\ref{eq5})$, Lemma \ref{cor2} i) and [\ref{Sato}, Theorem 27.7, p. 177].\\
ii) + iii) Clear by Theorem \ref{theorem1} and Lemma \ref{cor2} ii). Observe the condition in ii) implies $\nu(\bR)=\infty$ such that $Z$ has a density by i).
\end{proof}
\begin{example}
Let us look at the L\'{e}vy-measure $\nu(dx)=\sum\limits_{n=0}^\infty k_{n}\delta_{b^{-n}}(dx)$ for some integer $b\in \bN\setminus\{1\}$ such that $\sum\limits_{n=0}^\infty k_n=\infty$ and $\sup_{n\in\bN}k_n\le C <\infty$ for some positive $C>0$. It is indeed a L\'{e}vy measure as even $$\int_{\bR}\min\{1,x\}\nu(dx)=\sum\limits_{n=0}^\infty k_nb^{-n}\le C\sum\limits_{n=0}^\infty b^{-n}=\frac{C}{1-b^{-1}}<\infty.$$  It is known that the one-dimensional distribution of the L\'{e}vy process $L$ with characteristic triplet $(0,0,\nu)$ is continuously singular, see [\ref{Sato}, Theorem 27.19]. Let $g:[0,1]\to\bR$ be a positive, increasing $\mathcal{C}^1$ diffeomorphism onto its range with $\frac{g(1)}{g(0)}\ge b^l$ for some $l\in\bN$. Let $x\in[0,1]$. We know that there exists an $n\in\bN$ such that $b^{-n}<x\le b^{-n+1}$. We have that $$\nu\left(\left[x,\frac{g(1)}{g(0)}x\right] \right)\ge \nu\left((b^{-n},b^{-n+1+l}]\right)=\sum\limits_{r=n-l-1}^{n-1}k_{r}$$ and see by Corollary \ref{cor4} ii) that if there exist $\varepsilon>0$ and $m\in\bN$, $m\ge l+1$, such that $\sum\limits_{r=i-l-1}^{i-1}k_{r}\ge \frac{1+\varepsilon}{\inf_{y\in g([0,1])}|y||(g^{-1})'(y)}$ for every $i\ge m$ then the density of the random variable $Z=\int\limits_{0}^1 g(t) dL(t)$ is of bounded variation (observe that $\liminf_{x\to 0+}\nu\left(\frac{x}{g([0,t])}\right)=\liminf_{x\to 0+}\nu\left(\left[x,\frac{g(1)}{g(0)}x\right]\right)$). Examples of such sequences $(k_n)_{n\in\bN}$ are easily constructed.\\
Now let $g$ be an increasing positive $\mathcal{C}^1$-diffeomorphism onto its range with $\frac{g(1)}{g(0)}\le b^l$ for some $l\in\bN$. Then we have $\nu\left( [x,\frac{g(1)}{g(0)}x] \right)\le \nu\left([b^{-n+1},b^{-n+1+l}]\right)=\sum\limits_{r=n-l-1}^{n-1}k_{r}$ and we see that if there exist $\varepsilon>0$ and an $m\in\bN$, $m\ge l+1$, such that  $\sum\limits_{r=i-l-1}^{i-1}k_{r}\le \frac{1-\varepsilon}{\sup_{y\in g([0,1])}|y||(g^{-1})'(y)}$ for every $i>m$, then by Corollary \ref{cor4} iii) the density of $Z$ is not of bounded variation (the density exists by Corollary \ref{cor4} i)). It is easy to construct such examples. Observe that they satisfy $\nu(\bR)=\infty$, hence positive $\mathcal{C}^1$-diffeomorhisms and $\nu(\bR)=+\infty$ do not imply bounded variation of the density of $Z$.\\
\end{example}
\begin{example}
Let $\nu(dx)=\sum\limits_{n=0}^\infty k_n \delta_{b^{-2^{n}}}$ with $b>1$, $\sum_{n=0}^\infty k_n=\infty$ and $\sup_{n\in\bN}k_n\le C<\infty$. Let $g:[0,1]\to \bR^+$ be a positive increasing $\mathcal{C}^1$-diffeormorphism onto its range and $m>0$ such that $\frac{g(1)}{g(0)}=b^m$. Then it is relatively easy to see that $$\nu\left(\left[\frac{x}{g(1)},\frac{x}{g(0)}\right]\right)=k_{n-1}$$ if there exists an $n\in\bN$ such that $x\in [g(1)b^{-2^{(n-1)}-m},g(1)b^{-2^{(n-1)}}]$, otherwise the term is equal to $0$ for $x$ small enough.  We see directly that we cannot use Corollary \ref{cor4} ii) anymore to give a sufficient condition for the density to be of bounded variation since $\liminf_{x\to 0+}\nu\left(\frac{x}{g([0,t])}\right)=0$, but if $k_n\le  \frac{1-\varepsilon}{\sup_{y\in g([0,1])}|y||(g^{-1})'(y)}$ for some $\varepsilon>0$ for every $n>n_0\in\bN$ then the density is not of bounded variation. 
\end{example}
Now assume that we have a non-deterministic L\'{e}vy-process $L=(L_{t})_{t \ge 0}$, $\mathcal{L}(L_1)$ being self-decomposable, with characteristic triplet $(0,\gamma, \nu)$, with $\frac{l(x)}{|x|}$ the Lebesgue-density of the L\'{e}vy-measure and a bounded strictly positive function $g>0$ on an interval $[0,t]$. This is as in Corollary \ref{cor4}, but observe that we no longer assume that $g$ is a $\mathcal{C}^1$-diffeomorphism on the cost of more restrictive conditions on $L$. It follows from (\ref{eq5}) that the L\'{e}vy measure of $Z$ and hence also $Z$ has a density $f_g$ by [\ref{Sato}, Theorem 27.7]. 
\begin{corollary}\label{cor3}
 Let $Z$ be as above with density $f_g\in L^1(\bR,[0,\infty))$. 
\begin{itemize}
\item[i)] If $l(0+)+l(0-)>1/(pt)$ with $p\in (1,2]$, then there exists a constant $C>0$ such that
\begin{align*}
\int\limits_{\bR}|f_g(x-z)-f_g(x)|\,\lambda(dx)\le C |z|^{\frac{1}p}
\end{align*}
for every $z\in\bR$.
\item[ii)] If $l(0+)+l(0-)>1/t$, then there exists a constant $C>0$ such that
\begin{align*}
\int\limits_{\bR}|f_g(x-z)-f_g(x)|\,\lambda(dx)\le C |z|
\end{align*}
for every $z\in\bR$.
\item[iii)] If $l(0+)+l(0-)<\frac{1}{pt}$  with $p\in (0,\infty)$ and $a=0$, then
\begin{align*}
\sup_{0\le h \le |z|}\int\limits_{\bR}|f_{g}(x-h)-f_{g}(x)|\,\lambda(dx)\ge C |z|^{\frac{1}{p}}
\end{align*}
for some constant $C>0$ and $z\in(-1,1)$.
\end{itemize}
\end{corollary}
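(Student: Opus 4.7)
The plan is to apply Theorem \ref{theorem1} to $\mathcal{L}(Z)$, whose characteristic triplet $(a_g,\gamma_g,\nu_g)$ satisfies $a_g=0$ by (\ref{eq4}). The key step will be to exhibit an explicit Lebesgue density of $|x|\nu_g(dx)$ and to determine its one-sided limits at the origin.

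First, starting from (\ref{eq5}) and using that $g(s)>0$, I would perform the substitution $u=g(s)r$ in the inner integral:
\begin{align*}
\nu_g(A)
&=\int\limits_{[0,t]}\int\limits_{\bR}\one_{A\setminus\{0\}}(g(s)r)\,\frac{l(r)}{|r|}\,\lambda(dr)\,\lambda(ds)\\
&=\int\limits_{\bR}\one_{A\setminus\{0\}}(u)\,\frac{1}{|u|}\int\limits_{[0,t]} l(u/g(s))\,\lambda(ds)\,\lambda(du).
\end{align*}
By Tonelli's theorem, this identifies the Lebesgue density of $|x|\nu_g(dx)$ in a neighborhood of $0$ as
$$k_g(u):=\int_{[0,t]} l(u/g(s))\,\lambda(ds).$$

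Next, I would compute the one-sided limits of $k_g$ at zero. By self-decomposability, $l$ is decreasing on $(0,\infty)$ and increasing on $(-\infty,0)$, so for each fixed $s\in[0,t]$ the function $u\mapsto l(u/g(s))$ is monotone increasing as $u\downarrow 0+$, with pointwise limit $l(0+):=\lim_{x\to 0+} l(x)\in[0,\infty]$. Monotone convergence then yields $\lim_{u\to 0+} k_g(u)=t\cdot l(0+)$, and symmetrically $\lim_{u\to 0-} k_g(u)=t\cdot l(0-)$. Consequently, both $c_{\inf}$ and $c_{\sup}$ of Theorem \ref{theorem1} equal $t(l(0+)+l(0-))$, and the three hypotheses $l(0+)+l(0-)>1/(pt)$, $>1/t$, $<1/(pt)$ in i), ii), iii) translate precisely into $c_{\inf}>1/p$, $c_{\inf}>1$, $c_{\sup}<1/p$. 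The three conclusions then follow directly from the corresponding parts of Theorem \ref{theorem1} (noting that $a_g=0$ takes care of the additional hypothesis in iii)).

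The only subtlety worth flagging is the monotone convergence step: without self-decomposability one would have at best $\liminf$ and $\limsup$ bounds on $k_g$ coming from the one-sided behavior of $l$, and those bounds would not coincide in general. It is precisely the monotonicity of $l$ that forces $k_g$ itself to be monotone near $0$ on each side and permits the explicit identification of the limits, yielding the clean factor of $t$ that appears in the thresholds of i), ii), iii).
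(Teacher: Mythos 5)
Your proposal is correct and follows essentially the same route as the paper: identify the density $k(u)=\int_{[0,t]}l(u/g(s))\,\lambda(ds)$ of $|x|\nu_g(dx)$ from (\ref{eq5}) by the substitution $u=g(s)r$, compute $k(0+)+k(0-)=t\,(l(0+)+l(0-))$, and invoke Theorem \ref{theorem1}. Your explicit monotone-convergence justification of the limit interchange (using that $l$ is monotone near $0$ by self-decomposability) is a welcome filling-in of what the paper dismisses as ``easy calculations.''
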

\begin{proof}
The characteristic triplet of $\widehat{\mathcal{L}(Z)}$ is given by $(0,\gamma_g,\nu_g)$ as before, where
\begin{align*}
\nu_g(B)&=\int\limits_{[0,t]}\int\limits_{\bR} \one_{B}(g(s)r)\frac{l(r)}{|r|}\lambda(dr)\lambda(ds)
\end{align*}
by (\ref{eq5}). By easy calculations we find that 
\begin{align}\label{eq6}
k(r)/|r|:=\int_{[0,t]}l(r/g(s))\lambda(ds)/|r|
\end{align}
 is the Lebesgue density of $\nu_g$. Then
\begin{align*}
c_g:&=k(0+)+k(0-)\\
&=\lim_{r\to 0+}\int_{[0,t]}l(r/g(s))\lambda(ds)+\lim_{r\to 0-}\int_{[0,t]}l(r/g(s))g(s)\lambda(ds)\\
&=\int_{[0,t]}l(0+)\lambda(ds)+\int_{[0,t]}l(0-)\lambda(ds)\\
&=(l(0+)+l(0-))t,
\end{align*}
and the assertions follow by Theorem \ref{theorem1}.
\end{proof}
\begin{remark}
It follows from (\ref{eq6}) that in the situation of Corollary \ref{cor3} the distribution of $Z$ is also self-decomposable. By Corollary \ref{cor3} iii) we see that its probability density is not of bounded variation if the L\'{e}vy measure $\frac{l(x)}{|x|}\lambda(dx)$ satisfies $l(0+)+l(0-)<1/t$. As this property is independent of $g$, we see that for fixed $t$ we cannot find a positive $\mathcal{C}^1$-diffeomorphism for every characteristic triplet such that the stochastic integral has a density of bounded variation.\\
\end{remark}
\subsection{Stochastic integrals with non-compact supports}
Now we want to prove some aspects of the densities of distributions of the form $\int_{[0,\infty)}g(t)dL(t)$, whenever such an integral exists. As before we assume that $L$ has characteristic triplet $(0,\gamma,\nu)$ with $\nu(\bR)>0$. We assume that $g$ is a strictly positive, continuous function which attains its maximum $$c:=\max_{t\in[0,\infty)}g(t)$$ and that there exists a decomposition $(t_i)_{i\in\bN_0}$ with $0=t_0<t_1<\dotso$ and $t_i\to\infty$ for $i\to\infty$ such that $g$ restricted to $(t_i,t_{i+1})$ is a $\mathcal{C}^1-$diffeomorphism onto its range for every $i\in\bN_0$. Then we can write $$\int_{[0,\infty)}g(t)\,dL(t)=\sum_{i=0}^\infty \int_{t_i}^{t_{i+1}}g(t)\,dL(t)$$
where the limit is taken in probability and from Lemma \ref{cor2} i) we see that $\int_{[0,\infty)}g(t)dL(t)$ has a L\'{e}vy $\nu_g$ measure with Lebesgue density 
\begin{align*}
\frac{k(x)}{|x|}:=\frac{1}{|x|} \sum\limits_{i\in\bN_0} \int_{\bR}\one_{g((t_i,t_{i+1}))}(x/r)\frac{|x|}{|r|}|(g^{-1})'(x/r)|\nu(dr).
\end{align*}
From $(\ref{eq5})$ we further see that $\nu_g(\bR)=+\infty$, so that $\int_{[0,\infty)}g(t)dL(t)$ has a Lebesgue density by [\ref{Sato}, Theorem 27.7, p. 177].
Now we can write the density of the L\'{e}vy measure for $x>0$ as
\begin{align}
\frac{k(x)}{|x|}=&\frac{1}{|x|}\int\limits_{\bR} \sum\limits_{i\in\bN_0}\one_{g((t_i,t_{i+1}))}(x/r)\frac{|x|}{|r|}|(g^{-1})'(x/r)|\nu(dr)\nonumber\\
=& \frac{1}{|x|}\int_{\frac{x}{g((0,\infty))}} h(x/r)\nu(dr)\nonumber\\
=&\frac{1}{|x|}\int\limits_{\left[\frac{x}{c},\infty\right)} h(x/r)\nu(dr)\qquad a.e.,\label{eq6}
\end{align}
with $$h(s):=\sum_{i\in I_s} |s| |(g^{-1}|_{(t_i,t_{i+1})})'(s)|,$$where $I_s=\{i\in\bN_0: s\in g((t_{i},t_{i+1}))\}$. Similarly, $\frac{k(x)}{|x|}=\frac{1}{|x|}\int_{(-\infty,\frac{x}{c}]} h\left(\frac{x}{r}\right)\nu(dr)$ for $x<0$. Now we obtain immediately by Theorem \ref{theorem1}:
\begin{proposition}\label{lemma10}
Let $g$ have the same properties as above. \\
i) The random variable $\int_{[0,\infty)}g(t)\,dL(t)$, if existent, has a density of bounded variation, if $$\liminf_{x\to 0+}\int_{\left[\frac{x}{c},\infty\right)} h(x/r)\nu(dr)+\liminf_{x\to 0-}\int_{\left(-\infty,\frac{x}{c}\right]} h(x/r)\nu(dr)> 1.$$\\
ii) The random variable $\int_{[0,\infty)}g(t)\,dL(t)$, if existent, has not a density of bounded variation, if $$\limsup_{x\to 0+}\int_{\left[\frac{x}{c},\infty\right)} h(x/r)\nu(dr)+\limsup_{x\to 0-}\int_{\left(-\infty,\frac{x}{c}\right]} h(x/r)\nu(dr)< 1.$$\\
\end{proposition}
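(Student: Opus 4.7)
The proof is a direct application of Theorem \ref{theorem1} to the L\'{e}vy measure $\nu_g$ of $Z:=\int_{[0,\infty)}g(t)\,dL(t)$, whose density has just been computed. First I would note that since $L$ has characteristic triplet $(0,\gamma,\nu)$, the $t=\infty$ analogue of (\ref{eq4}) gives $a_g=0$, so $Z$ has triplet $(0,\gamma_g,\nu_g)$. The formula derived immediately before the proposition identifies the Lebesgue density $k(x)$ of $|x|\nu_g(dx)$ in a punctured neighbourhood of zero as
\begin{align*}
k(x)=\int_{[x/c,\infty)} h(x/r)\,\nu(dr)\q(x>0),\qq k(x)=\int_{(-\infty,x/c]} h(x/r)\,\nu(dr)\q(x<0),
\end{align*}
so the one-sided $\liminf$ and $\limsup$ sums in the hypotheses of the proposition are precisely the quantities $c_{\inf}$ and $c_{\sup}$ appearing in Theorem \ref{theorem1}.

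Part (i) is then immediate: the hypothesis $c_{\inf}>1$ is exactly the condition of Theorem \ref{theorem1}~(ii), which yields $\int_{\bR}|f_Z(x-z)-f_Z(x)|\,\lambda(dx)\le C|z|$ for every $z\in\bR$, and by the BV-equivalence recalled in the introduction this means $f_Z\in BV(\bR,\bR)$.

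For part (ii), the strict inequality $c_{\sup}<1$ allows the choice of $p>1$ with $1/p>c_{\sup}$. Theorem \ref{theorem1}~(iii), whose hypothesis $a=0$ is satisfied here, then produces a constant $\widetilde C>0$ with
\begin{align*}
\sup_{0\le h\le|z|}\int_{\bR}|f_Z(x-h)-f_Z(x)|\,\lambda(dx)\ge \widetilde C|z|^{1/p}
\end{align*}
for every $z\in(-1,1)$. If $f_Z$ were of bounded variation then the left-hand side would be dominated by $C'|z|$, but $|z|^{1/p}/|z|=|z|^{1/p-1}\to\infty$ as $z\to 0$ since $1/p<1$, a contradiction. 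Hence $f_Z$ is not of bounded variation.

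The main obstacle here is essentially nil: the heavy lifting has already been done by Theorem \ref{theorem1} and by the explicit density computation preceding the proposition, so what remains is bookkeeping. The only step requiring a moment's thought is the selection of $p$ in (ii), which is enabled by the strict inequality and is exactly what turns the $|z|^{1/p}$ lower bound into a genuine obstruction to bounded variation.
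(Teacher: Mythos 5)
Your proposal is correct and follows exactly the route the paper intends: the paper dispatches this proposition with the single phrase ``we obtain immediately by Theorem \ref{theorem1}'', and your write-up simply makes explicit the identification of the one-sided limits of $k$ with $c_{\inf}$ and $c_{\sup}$, the application of parts (ii) and (iii) of that theorem, and the equivalence between the bound $\int_{\bR}|f(x-z)-f(x)|\,\lambda(dx)\le C|z|$ and bounded variation cited in the introduction. The choice of $p>1$ with $c_{\sup}<1/p<1$ in part (ii) and the resulting contradiction via $|z|^{1/p-1}\to\infty$ is exactly the intended (implicit) argument.
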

 If the integral is existent, it is known that there exists a sequence $(z_n)_{n\in\bN}$ such that $z_n\to \infty$ and $g(z_n)\to 0$ for $n\to\infty$. We use this simple fact to prove our next corollary.
\begin{corollary}\label{prop10}
Let $g:[0,\infty)\to (0,\infty)$ have the same properties as above, denote $T:=\{t_i:i\in\bN\}$ and assume that $$\liminf_{x\to\infty, x\notin T}\left|\frac{g(x)}{g'(x)}\right|=\alpha$$ for some $\alpha\in(0,\infty]$. Then $\int_{[0,\infty)}g(t)\,dL(t)$ has a density of bounded variation, if $\nu(\bR)> \frac{1}{\alpha}$.
\end{corollary}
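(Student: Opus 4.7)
My plan is to invoke Proposition~\ref{lemma10}~i). It suffices to establish that
\[
\liminf_{x\to 0+}\!\!\int_{[x/c,\infty)}\!\!\!h(x/r)\,\nu(dr)+\liminf_{x\to 0-}\!\!\int_{(-\infty,x/c]}\!\!\!h(x/r)\,\nu(dr)\ge \alpha\,\nu(\bR),
\]
which by hypothesis is strictly greater than $1$. Both liminf terms will be handled symmetrically, so I describe the $x\to 0+$ case; the analogous bound for $x\to 0-$ uses $\nu$ restricted to $(-\infty,0)$.

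\textbf{Step 1 (Pointwise lower bound on $h$).} Fix $\varepsilon\in(0,\alpha)$ (read $\alpha-\varepsilon$ as an arbitrarily large constant when $\alpha=\infty$). By the defining hypothesis there exists $M>0$ such that $|g(y)/g'(y)|\ge\alpha-\varepsilon$ for every $y\in(M,\infty)\setminus T$. Since $g$ is continuous and strictly positive on the compact set $[0,M]$, $m:=\min_{[0,M]}g>0$. I claim $h(s)\ge\alpha-\varepsilon$ for each $s\in(0,m)\setminus g(T)$. Indeed, $g$ attains its maximum $c\ge s$ and, via the sequence $z_n\to\infty$ with $g(z_n)\to 0$ noted just before the statement, takes values below $s$ arbitrarily far out; the intermediate value theorem therefore produces some $y\in(0,\infty)$ with $g(y)=s$, and the condition $s<m$ forces $y>M$. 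Since $s\notin g(T)$ we have $y\notin T$, so $y\in(t_i,t_{i+1})$ for some $i\in I_s$, whence
\[
h(s)\ge\frac{|s|}{|g'(y)|}=\Bigl|\frac{g(y)}{g'(y)}\Bigr|\ge\alpha-\varepsilon.
\]

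\textbf{Step 2 (From pointwise to integral).} For $0<x<cm$, the inclusion $[x/m,\infty)\subseteq[x/c,\infty)$ holds and $x/r\in(0,m]$ on the former set, so by Step~1 and $h\ge0$,
\[
\int_{[x/c,\infty)}\!\!\!h(x/r)\,\nu(dr)\ge(\alpha-\varepsilon)\,\nu\bigl([x/m,\infty)\setminus(x/g(T))\bigr).
\]
As $x\to 0+$ one has $\nu([x/m,\infty))\uparrow\nu((0,\infty))$, while the countable excluded set $x/g(T)$ carries no $\nu$-mass outside a countable set of bad $x$-values (those for which $x/\tau$ coincides with an atom of $\nu$ for some $\tau\in g(T)$). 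Passing to the liminf yields $\liminf_{x\to 0+}\int_{[x/c,\infty)}h(x/r)\,\nu(dr)\ge(\alpha-\varepsilon)\,\nu((0,\infty))$, and the analogous $x\to 0-$ estimate gives $(\alpha-\varepsilon)\,\nu((-\infty,0))$. Summing and letting $\varepsilon\to 0$ produces $\alpha\,\nu(\bR)>1$, which by Proposition~\ref{lemma10}~i) is the desired conclusion.

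\textbf{Main obstacle.} The principal difficulty is Step~1, where three conditions must be arranged simultaneously for small $s$: existence of a preimage $y$ of $s$ (via IVT, exploiting $g(z_n)\to 0$), location of $y$ in $(M,\infty)$ so that the $|g/g'|$-bound is applicable (compactness of $[0,M]$ together with $s<m$), and avoidance of the partition set $T$ (via $s\notin g(T)$, which excludes only countably many values). A further technical subtlety in Step~2 is the treatment of potential $\nu$-atoms at points of $x/g(T)$; since such coincidences happen only along a countable set of $x$, the liminf is controlled by the generic sequences on which the stated bound holds cleanly.
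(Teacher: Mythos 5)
Your proof is correct and follows essentially the same route as the paper's: locate far-out preimages of small values $s\notin g(T)$ to bound $h(s)$ from below by roughly $\alpha$, integrate against $\nu$, and invoke Proposition~\ref{lemma10} (your Step 1 is in fact more explicit than the paper's terse "choose $y_n\to\infty$ with $g(y_n)=x_n$"). The only point to tighten is the final "generic sequences" remark: the liminf of the literal integrand can still be spoiled along the countable set of $x$ for which $x/g(T)$ meets an atom of $\nu$, so one should, as the paper does, pass to the a.e.-equal modification $\tilde{h}$ with $\tilde{h}\equiv\infty$ on $g(T)$; this is legitimate because Theorem~\ref{theorem1} only requires \emph{some} version of the Lebesgue density of $|x|\nu_g(dx)$ to satisfy the liminf condition.
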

\begin{proof}   Assume that $\liminf_{x\to\infty,x\notin T}\left|\frac{g(x)}{g'(x)}\right|=\alpha$ for some $\alpha\in(0,\infty]$. \\ We define the function $\tilde{h}:(0,c]\to \bR^+\cup \{\infty\}$ by $\tilde{h}(x)=h(x)$ for all $x\in (0,c]\setminus g(T)$ and $\tilde{h}(x)=\infty$ otherwise. Then it holds for $x>0$ that
\begin{align*}
k(x)=\int_{\left[\frac{x}{c},\infty\right)}h\left(\frac{x}{r}\right)\nu(dr)=\int_{\left[\frac{x}{c},\infty\right)\setminus \{\frac{x}{r}\in g(T)\}}h\left(\frac{x}{r}\right)\nu(dr)+\int_{ \{\frac{x}{r}\in g(T)\}}h\left(\frac{x}{r}\right)\nu(dr).
\end{align*}
Now as $\nu$ has a countable number of points with positive mass we conclude that only in the set $\{\frac{x}{r}\in g(T)\}\cap \{r\in B\}$, where $B$ is the set of points with a positive mass of $\nu$, $\int_{ \{\frac{x}{r}\in g(T)\}}h\left(\frac{x}{r}\right)\nu(dr)$ is unequal to $0$. So we see that we only differ on a  Lebesgue null set by considering $\tilde{k}(x)=\int_{\left[\frac{x}{c},\infty\right)}\tilde{h}\left(\frac{x}{r}\right)\nu(dr)$ instead of $k$. Oberserve that the same arguments work for $x<0$.\\
Let $x_n\to 0+$ for $n\to\infty$ with $x_n\notin g(T)$ and choose $y_n\to\infty$ with $g(y_n)=x_n$ (existent since $g$ is continuous and by the observation above). We see that
\begin{align*}
\liminf_{n\to\infty} \tilde{h}(x_n)\ge\liminf_{n\to\infty}\frac{|x_n|}{|g'(y_n)|}=\liminf_{n\to\infty}\frac{|g(y_n)|}{|g'(y_n))|}\ge \alpha,
\end{align*}
as $y_n\to\infty$. Therefore we obtain by the Lemma of Fatou
\begin{align*}
\liminf_{x\to 0+} \tilde{k}(x)\ge \alpha \nu((0,\infty)) \quad\textrm{ and }\quad \liminf_{x\to 0-} \tilde{k}(x)\ge \alpha \nu((-\infty,0)).
\end{align*}
Proposition \ref{lemma10} implies  that $\int_{[0,\infty)}g(t)dL(t)$ has a density of bounded variation if $\nu(\bR) > \frac{1}{\alpha}$.
\end{proof}
\begin{remark}\label{rem1} We could also use other specifications for $g$. For example consider a strictly positive and continuous function $g$ on $[0,\infty)$ such that there exist sequences $(a_n)_{n\in\bN}$ and $(b_n)_{n\in\bN}$ with $0<a_n<b_n\le a_{n+1}$ for every $n\in\bN$ such that $g|_{(a_n,b_n)}$ is a $\mathcal{C}^1$-diffeomorphism onto its range and $g(\cup_{n=m}^\infty [a_n,b_n))$ is a half-open interval with a maximum $c<\infty$ and infimum $0$ for an $m\in\bN$, i.e. $g(\cup_{n=m}^\infty [a_n,b_n))=(0,c]$. For these kind of functions Proposition \ref{lemma10} i) and Corollary \ref{prop10} also hold true, where $$h(s):=\sum_{i\in I_s} |s| |(g^{-1}|_{(a_i,b_{i})})'(s)|,$$ with $I_s=\{i\in\bN_0: s\in g((a_{i},b_{i}))\}$.
\end{remark}
\begin{example}
 Applying Corollary \ref{prop10} to the function $g(x)=e^{-bx}$ with $b>0$ gives $\alpha=\frac{1}{b}$, hence $\int_{[0,\infty)}e^{-bt}dL(t)$ has a density of  bounded variation if $\nu(\bR)>\frac{1}{b}$. Applying Corollary \ref{prop10} (more precisely, the extension according to Remark \ref{rem1}) to the function $g(x)=\min\{x^{-p},C\}$ with $p>0$ gives $\alpha=\infty$. Hence $\int_{[0,\infty)}\min\{t^{-p},C\} dL(t)$ has a density of bounded variation when $\nu(\bR)>0$.
\end{example}
If $g(x)=e^{-x^2}$ we cannot use Corollary \ref{prop10} as $\frac{g(x)}{g'(x)}=1/(2x)\to 0$ for $x\to\infty$. We  will give another condition such that we can obtain sufficient conditions for the existence of a probability density of bounded variation implied by such a kernel function.
\begin{corollary}
Let $g(x)=e^{-\psi(x)}$ with $\psi:[0,\infty)\to \bR$ continuous such that $\psi:(0,\infty)\to(0,\infty)$ is a strictly increasing $\mathcal{C}^1$-diffeomorphism and such that $\psi(0)=0$ and $(\psi^{-1})'$ is decreasing. Then the Lebesgue density of $\int_{[0,\infty)}g(t)dL(t)$ is of bounded variation if 
\begin{align*}
&\liminf_{x\to 0+}(\psi^{-1})'(-\log(x))\nu((x,1))+\liminf_{x\to 0-}(\psi^{-1})'(-\log|x|)\nu((-1,x))\\
=&\liminf_{x\to 0+}\frac{\nu((x,1))}{\psi'(\psi^{-1}(-\log(x)))}+\liminf_{x\to 0-}\frac{\nu((-1,x))}{\psi'(\psi^{-1}(-\log|x|))}>1.
\end{align*}
\end{corollary}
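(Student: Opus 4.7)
The plan is to recognize this corollary as a direct application of Proposition \ref{lemma10}(i): I just need to compute the auxiliary function $h$ attached to the kernel $g=e^{-\psi}$ and then bound below the integrals appearing in that proposition by the quantities in the statement, using only the monotonicity of $(\psi^{-1})'$.

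First I would set up the geometry. Since $\psi:(0,\infty)\to(0,\infty)$ is a strictly increasing $\mathcal{C}^1$-diffeomorphism with $\psi(0)=0$ (and necessarily $\psi(\infty)=\infty$), the kernel $g=e^{-\psi}$ is a strictly decreasing $\mathcal{C}^1$-diffeomorphism from $(0,\infty)$ onto $(0,1)$, continuous on $[0,\infty)$ with $g(0)=1$. Thus $c=\max g=1$ and the partition $t_n=n$ trivially satisfies the framework preceding Proposition \ref{lemma10}. Inverting yields $g^{-1}(y)=\psi^{-1}(-\log y)$, and the inverse function theorem gives $|(g^{-1})'(y)|=(\psi^{-1})'(-\log y)/y$ for $y\in(0,1)$. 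Since $I_s$ is a singleton for almost every $s\in(0,1)$, the definition of $h$ collapses to
\[
h(s)=|s|\,|(g^{-1})'(s)|=(\psi^{-1})'(-\log s),\qquad s\in(0,1].
\]

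Next I would substitute into formula (\ref{eq6}). For $x>0$ small,
\[
k(x)=\int_{[x,\infty)}(\psi^{-1})'(\log(r/x))\,\nu(dr).
\]
Restricting the integration to $r\in(x,1)$ gives $\log(r/x)\in(0,-\log x)$, and since $(\psi^{-1})'$ is decreasing this yields $(\psi^{-1})'(\log(r/x))\ge(\psi^{-1})'(-\log x)$ on that range, hence
\[
k(x)\ge(\psi^{-1})'(-\log x)\,\nu((x,1)).
\]
A symmetric computation for $x<0$, restricting to $r\in(-1,x)$ and using $h(x/r)=(\psi^{-1})'(\log(r/x))$ with $r/x=|r|/|x|>1$, gives $k(x)\ge(\psi^{-1})'(-\log|x|)\,\nu((-1,x))$.

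Finally, taking $\liminf$ as $x\to 0\pm$ and summing the two inequalities shows that the hypothesis in the statement forces the $\liminf$-criterion of Proposition \ref{lemma10}(i) to exceed $1$, so $\int_{[0,\infty)}g(t)\,dL(t)$ has a density of bounded variation. The second equality in the statement is simply the inverse function identity $(\psi^{-1})'(y)=1/\psi'(\psi^{-1}(y))$ applied at $y=-\log|x|$. There is essentially no hard step here; the only care required is matching the direction of the monotonicity of $(\psi^{-1})'$ against the interval $(0,-\log|x|)$ traversed by $\log(r/x)$, and checking that the integrand in (\ref{eq6}) is evaluated at a point of $(0,1]$ so that the formula for $h$ applies.
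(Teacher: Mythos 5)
Your proof is correct and follows essentially the same route as the paper: compute $h(s)=(\psi^{-1})'(-\log s)$ from the explicit inverse $g^{-1}(y)=\psi^{-1}(-\log y)$, plug into (\ref{eq6}) to get $k(x)=\int(\psi^{-1})'(\log(r/x))\,\nu(dr)$, use the monotonicity of $(\psi^{-1})'$ to bound this below by $(\psi^{-1})'(-\log|x|)$ times the mass of $(x,1)$ (resp. $(-1,x)$), and invoke Proposition \ref{lemma10}(i). The only difference is cosmetic: the paper treats only the positive half-line ``for simplicity of notation,'' whereas you carry out the symmetric negative case explicitly.
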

\begin{proof}
For simplicity of notation, we assume that $\nu((-\infty,0))=0$. A direct calculation gives us from (\ref{eq6}) that
\begin{align*}
k(x)=\int_{(x,\infty)}(\psi^{-1})'(\log(r)-\log(x))\nu(dr)=\int_{(x,\infty)}\frac{1}{\psi'(\psi^{-1}(\log(r)-\log(x)))}\nu(dr).
\end{align*}
As $(\psi^{-1})'$ is decreasing we see that for $0<x<1$
\begin{align*}
k(x)\ge \int_{(x,1)} (\psi^{-1})'(-\log(x))\nu(dr)= (\psi^{-1})'(-\log(x))\nu((x,1)).
\end{align*}
So we see by Proposition \ref{lemma10} that if $$\liminf_{x\to 0+}(\psi^{-1})'(-\log(x))\nu((x,1))=\liminf_{x\to 0+}\frac{\nu((x,1))}{\psi'(\psi^{-1}(-\log(x)))}>1$$ the Lebesgue density of $\int_{[0,\infty)}g(t)dL(t)$ is of bounded variation.
\end{proof}
\begin{example}
Let $\psi(x)=x^p$ for $p>1$. Then we have $\psi^{-1}(x)=x^{1/p},\, (\psi^{-1})'(x)=\frac{1}{p}x^{1/p-1}$, which is decreasing. We see that if $$\liminf_{x\to 0+}\frac{\nu((x,1))}{\left(\log\left(\frac{1}{x}\right)\right)^{1/p-1}}+\liminf_{x\to 0-}\frac{\nu((-1,x))}{\left(\log\left(\frac{1}{|x|}\right)\right)^{1/p-1}}>p$$
the Lebesgue density of $\int_{[0,\infty)}e^{-t^p}dL(t)$ is of bounded variation.
\end{example}
\section*{Acknowledgement:} The author would like to thank Alexander Lindner for his patience and support and for many interesting and fruitful discussions.

David Berger\\
 Ulm University, Institute of Mathematical Finance, Helmholtzstra{\ss}e 18, 89081 Ulm,
Germany\\
email: david.berger@uni-ulm.de

\begin{thebibliography}{LPSt}
\bibitem{Ambrosio} L. Ambrosio, N. Fusco and D. Pallara, {\it Functions of Bounded Variation and Free Discontinuity}, Oxford University Press, (2000)\label{Ambrosio}
\bibitem{Bray} W. O. Bray and M. A. Pinsky, {\it Growth properties of Fourier transforms via moduli of continuity}, J. Func. Anal. 225 (2008) 2265-2285 \label{Bray}
\bibitem{Datta} S. Datta, {\it Some nonasymptotic bounds for $L_1$ density estimation using kernels}, Ann. Stat., 1992, Vol. 20, No.3, 1658-1667 \label{Datta}
\bibitem{Gorodetskii} V. V. Gorodetskii, {\it On the Strong Mixing Property for Linear Sequences}, Theory Probab. Appl., 22(2), 411-413 \label{Gorodetskii}
\bibitem{Grafakos} L. Grafakos, {\it Classical Fourier Analysis}, Second edition, Springer, 2008 \label{Grafakos}
\bibitem{Hencl} S. Hencl, P. Koskela, {\it Lectures on Mappings of Finite Distortion}, Springer, 2013\label{Hencl}
\bibitem{Rajput} B. S. Rajput and J. Rosinski, {\it Spectral Representations of Infinitely Divisible Processes}, Probab. Th. Rel. Fields 82, 451-487 (1989) \label{Rajput}
\bibitem{Sato} K. Sato, {\it L\'{e}vy Processes and Infinitely Divisble Distributions}, Cambridge studies in advanced mathematics, 2007 \label{Sato} 
\bibitem{Trabs} M. Trabs, {\it On infinitely divisible distributions with polynomially decaying characteristic functions}, Stat. Prob. Letters 94, 2014 \label{Trabs}
\end{thebibliography}
\end{document}